\newtheorem{theorem}{Theorem}
\newtheorem{corollary}{Corollary}
\newtheorem{lemma}{Lemma}
\newtheorem{proposition}{Proposition}
\newenvironment{proof}[1][Proof]{\noindent\textbf{#1.} }{\ \rule{0.5em}{0.5em}}
\newcommand{\A}{$\mathrm{Aut}(F_n)$}
\newcommand{\AT}{$\mathrm{Aut}(F_2)$}
\newcommand{\ON}{$\mathrm{Out}(F_n)$}
\newcommand{\OT}{$\mathrm{Out}(F_3)$}
\newcommand{\M}{$\mathrm{MCG}(S_g)$}
\begin{document}

\title{Rank gradient and cost of Artin groups and their relatives}
\author{Aditi Kar and Nikolay Nikolov \footnote{The authors acknowledge the support of EPSRC grants EP/I020276/1 (first author) and EP/H045112/1 (second author).}}
\date{}

\maketitle
\abstract{We prove that the rank gradient vanishes for mapping class groups of genus bigger than 1, \A \ for all $n$, \ON, $n \geq 3$ and any Artin group whose underlying graph is connected. These groups have fixed price 1. We compute the rank gradient and verify that it is equal to the first $L^2$-Betti number for some classes of Coxeter groups.}
\\

Keywords: rank gradient, cost, Artin groups
\\

MSC classification numbers: 20F69 (primary); 20F36 (secondary)

\section{Introduction} 
Let $G$ be a finitely generated group. We denote by $d(G)$ the minimal size of a generating set of $G$ (setting $d(\{1\})=0$) and for a subgroup $H<G$ of finite index define $$r(G,H)= (d(H)-1)/[G:H].$$
Let $H_1>H_2> \cdots $ be a chain of normal subgroups of finite index in $G$. We define the rank gradient of $G$ with respect to the chain $(H_i)$ to be 
\[ RG( G, (H_i))= \lim_{i \rightarrow \infty} r(G,H_i). \]
The notion of rank gradient was first defined by Lackenby in \cite{Lack} for the study of Kleinian groups and further investigated in \cite{AN} and \cite{AJN}.
It is not known whether the limit $RG(G,(H_i))$ depends on the choice of the chain $(H_i)$ under the condition that $\cap_i H_i=\{1\}$.
However, as shown in \cite{AJN}, when $G$ contains a normal infinite amenable subgroup or is a non-uniform lattice in a higher rank Lie group then $RG(G,(H_i))=0$ for any normal chain $(H_i)$ in $G$ with trivial intersection.

We remark that rank gradient with respect to an infinite chain in a group $G$ is a natural upper bound for its first $L^2$-Betti number $\beta^{(2)}_1(G)$. Indeed it is well known that $d(H) \geq \beta^{(2)}_1(H)$ giving that $r(G,H_i) \geq \beta^{(2)}_1(G) - [G:H_i]^{-1}$ and letting $[G:H_i] \rightarrow \infty$ we get
\begin{equation} \label{ineq} RG(G,(H_i)) \geq \beta^{(2)}_1(G).\end{equation} In particular if the rank gradient of $G$ is zero with respect to some infinite chain then the first $L^2$-Betti number of $G$ is also zero. It is not known whether the inequality (\ref{ineq}) can be strict for a normal chain $(H_i)$ with trivial intersection. 

In this note we compute the rank gradient (with respect to all chains) for several well-known families of groups: Artin groups, mapping class groups, Aut$(F_n)$, \ON\  and some Coxeter groups. A result of Gaboriau \cite{Ga2} states that the first $L^2$-Betti number vanishes for any group that contains an infinite normal finitely generated subgroup of infinite index and in this situation one can sometimes find a chain with respect to which rank gradient is zero, see \cite{KL}. This applies to Artin groups of finite type, mapping class groups and (outer-) automorphism groups of free groups. Additionally, the $L^2$-Betti numbers of Artin groups and many Coxeter groups have been computed. This paper is an attempt to compare these results with the rank gradient. In all of the groups considered here, the rank gradient is found to be independent of the normal chain and moreover is equal to $\beta^{(2)}_1(G)$. 

Our arguments yield the stronger result that these groups have \emph{fixed price}.
For definition and properties of cost and fixed price we refer the reader to \cite{Ga} and the book \cite{KM}. A key connection between rank gradient and cost is the result of Abert and Nikolov \cite{AN} that $RG(G, (H_i))= c(G, \hat G) -1$ where $c(G, \hat G)$ is the cost of the action of $G$ by left multiplication on its completion $\hat G$ with respect to the normal chain $(H_i)$ with $\cap_i H_i= \{1\}$. In this way results about cost of group actions have immediate applications to rank gradient. We remark that it is an open problem whether every countable group has fixed price.

\begin{theorem}\label{main}
Each of the groups in the list below has fixed price 1 and rank gradient zero.
\begin{itemize} \item Artin groups whose defining graphs are connected,
\item \A, $n\geq 2$,
\item \ON, $n \geq 3$, and
\item \M, $g\geq 2$. 
\end{itemize}
\end{theorem}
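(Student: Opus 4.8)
\medskip\noindent\textbf{Proof plan.}
The plan is to prove the stronger statement that each group has fixed price $1$; since $RG(G,(H_i))=c(G,\hat G)-1$ by the theorem of Abert and Nikolov \cite{AN} recalled above, fixed price $1$ forces the rank gradient to vanish with respect to \emph{every} normal chain. I will use three standard facts about cost, essentially due to Gaboriau \cite{Ga}: (i) every amenable group --- and, more generally, every group containing an infinite amenable normal subgroup --- has fixed price $1$; (ii) if $G=\langle H,K\rangle$ with $H$ and $K$ of fixed price $1$ and $H\cap K$ infinite, then $G$ has fixed price $1$, and, inductively, the same holds when $G$ is generated by subgroups $G_i$ of fixed price $1$ such that the graph on the index set with an edge $ij$ whenever $G_i\cap G_j$ is infinite is connected; (iii) fixed price $1$ passes from a finite-index subgroup $H$ to the whole group $G$, via $c(R|_H)-1=[G:H]\,(c(R)-1)$ for the orbit relation $R$ of a free $G$-action. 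In each of the four families the task then becomes to exhibit a connected generating family of fixed-price-$1$ subgroups, and in each case these subgroups will be abelian or will contain an infinite central subgroup, so that (i) applies to them.

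For an Artin group $A_\Gamma$ with $\Gamma$ connected: if $\Gamma$ is a single vertex then $A_\Gamma\cong\mathbb Z$, and otherwise every vertex lies on an edge, for which the corresponding parabolic subgroup $\langle v,w\rangle$ is a two-generator Artin group --- which is $\mathbb Z^2$ or has infinite cyclic centre, hence has fixed price $1$ by (i). Two such subgroups sharing a vertex $v$ meet in $\langle v\rangle\cong\mathbb Z$, and since $\Gamma$ is connected its line graph is connected, so (ii) applies. For $\M$ with $g\ge2$: by Humphries and Lickorish the group is generated by Dehn twists $T_{c_1},\dots,T_{c_k}$ along a connected configuration of simple closed curves; when $c_i,c_j$ are disjoint, $\langle T_{c_i},T_{c_j}\rangle\cong\mathbb Z^2$, and when they meet once, a regular neighbourhood of $c_i\cup c_j$ is a one-holed torus whose boundary $c$ is an essential curve (this is where $g\ge2$ enters) and the chain relation gives $T_c=(T_{c_i}T_{c_j})^6\in\langle T_{c_i},T_{c_j}\rangle$; as $c$ is disjoint from $c_i$ and $c_j$ the twist $T_c$ is central in $\langle T_{c_i},T_{c_j}\rangle$ and of infinite order, so this subgroup has fixed price $1$ by (i). Consecutive twist-pair subgroups share a $\langle T_{c_i}\rangle\cong\mathbb Z$, the configuration is connected, and (ii) applies. (The bound $g\ge2$ is genuine: for $g=1$ the curve $c$ bounds and $\M$ is $SL_2(\mathbb Z)$, of cost $>1$.)

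For $\A$ and $\ON$ with $n\ge3$: by (iii) it suffices to treat the index-two subgroup generated by the elementary Nielsen transformations $\rho_{ij}\colon x_i\mapsto x_ix_j$ and $\lambda_{ij}\colon x_i\mapsto x_jx_i$ (or their images in $\ON$). A direct computation shows that for fixed $j$ all the $\rho_{ij},\lambda_{ij}$ with $i\ne j$ commute, generating an abelian subgroup $P_j\cong\mathbb Z^{2(n-1)}$ containing them all, and that for distinct $i,j,k$ the elements $\rho_{ij}$ and $\lambda_{ik}$ commute, giving a copy of $\mathbb Z^2$ meeting $P_j$ in $\langle\rho_{ij}\rangle$ and $P_k$ in $\langle\lambda_{ik}\rangle$; since $n\ge3$ one can pick such ``connectors'' joining $P_1,\dots,P_n$ into a chain. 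Thus the $P_j$ together with the connectors form a connected family of abelian (so fixed-price-$1$) subgroups generating the index-two subgroup, and (ii) applies; in $\ON$ the images of the $P_j$ and connectors are still infinite abelian, so the same bookkeeping works (this is where $n\ge3$ is used). The case $\AT$ is handled separately: a torsion-free finite-index subgroup of $\mathrm{GL}_2(\mathbb Z)=\mathrm{Out}(F_2)$ is free, so the corresponding extension of it by $\mathrm{Inn}(F_2)\cong F_2$ splits and $\AT$ contains a finite-index subgroup $\cong F_2\rtimes F_2$; the latter is generated by its two mapping-torus subgroups $F_2\rtimes\langle s\rangle$, each of which surjects onto $\mathbb Z$ with finitely generated kernel $F_2$ and hence (a result of Gaboriau) has fixed price $1$, and they share the infinite normal subgroup $F_2$, so $F_2\rtimes F_2$ --- and with (iii) also $\AT$ --- has fixed price $1$.

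I expect the real obstacle to be this last family. The Dehn-twist and dihedral-parabolic families are essentially forced on us, but for $\A$ and $\ON$ one must verify by an explicit, if elementary, manipulation of the Nielsen generators both that the chosen abelian subgroups generate a finite-index subgroup and that their infinite-intersection graph is connected, and the rank-two case $\AT$ falls outside the pattern entirely and needs its own argument.
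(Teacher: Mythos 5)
Your overall strategy --- exhibit, for each group, a connected family of fixed--price--$1$ subgroups that generates --- is the same as the paper's, but your choices of subgroups are genuinely different in three of the four cases, and where they work they are arguably cleaner. For Artin groups you take all edge subgroups $A_e$ (infinite centre, hence fixed price $1$) glued along the vertex subgroups; the paper instead runs an induction on $|V\Gamma|$ with a cut-vertex/non-cut-vertex dichotomy, which it needs because it also proves the rank gradient statement directly for chains with intersection $A_0$ (Artin groups are not known to be residually finite, so the reduction ``fixed price $1$ $\Rightarrow$ $RG=0$'' via Abert--Nikolov is not available verbatim there). For \M\ your chain-relation argument that $\langle T_{c_i},T_{c_j}\rangle$ has infinite centre is essentially equivalent to the paper's observation (via Birman--Hilden) that consecutive Lickorish twists generate $B_3$. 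For \A\ and \ON\ with $n\geq 3$ your route through the abelian transvection subgroups $P_j\cong\mathbb Z^{2(n-1)}$ of $\mathrm{SAut}(F_n)$ is a legitimate alternative to the paper's route through $B_n/Z_n$ and copies of \AT; it does lean on Gersten's theorem that the $\rho_{ij},\lambda_{ij}$ generate the index-two subgroup, and your appeal to ``fixed price passes up finite index'' is most safely replaced by the paper's own trick: adjoin an infinite-order coset representative $t$ and apply Proposition \ref{cost}(iii) to $H$ and $\langle t\rangle$, whose intersection contains $\langle t^2\rangle$.

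The genuine gap is exactly where you predicted trouble: \AT. Your argument reduces it to the claim that a mapping torus $F_2\rtimes\langle s\rangle$ has fixed price $1$ because it has a finitely generated infinite normal subgroup of infinite index, citing ``a result of Gaboriau.'' No such result exists for cost. Gaboriau's theorem in that setting is about the first $L^2$-Betti number ($\beta_1^{(2)}=0$), and the paper's introduction is careful to say only that in this situation ``one can \emph{sometimes} find a chain with respect to which rank gradient is zero.'' The cost criteria require the normal (or generating) subgroups themselves to have fixed price $1$ --- infinite amenable normal subgroups, infinite centre, etc. --- and $F_2$ does not. Indeed, if ``finitely generated by $\mathbb Z$'' implied fixed price $1$, then every fibered hyperbolic $3$-manifold group (surface-by-$\mathbb Z$) would have fixed price $1$, which is a well-known open problem and is precisely the circle of questions motivating the Abert--Nikolov paper you cite. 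So the $n=2$ case of the second bullet is unproved in your write-up. The paper's fix is to observe that \AT\ contains $B_4/Z_4$ with index two, and that $B_4/Z_4$ is generated by the images of the two standard copies of $B_3$ (connected Artin groups, hence fixed price $1$ by Theorem \ref{artin}), which meet $Z_4$ trivially and intersect each other in an infinite subgroup; Proposition \ref{cost}(iii), applied once more with an infinite-order element outside $B_4/Z_4$, then yields fixed price $1$ for \AT. Substituting that argument for your mapping-torus step would close the gap; the rest of your proposal stands.
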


Theorem \ref{main} above summarises the content of Theorems \ref{artin} and \ref{free} appearing in the first half of the paper. In section \ref{cox}, we study a specific class $\mathcal{C}$ of Coxeter groups and show that the rank gradient for a group in $\mathcal{C}$ is equal to its first $L^2$-Betti number. 

Singer's conjecture predicts that the $L^2$-Betti numbers of a closed aspherical $n$-manifold are zero in all dimensions except for $n/2$. The conjecture has been verified for different classes of groups, including some right-angled Coxeter groups \cite{davies}. We suspect that more generally if $G$ is a virtual Poincare duality group of dimension at least three then the rank gradient of $G$ is zero with respect to any normal chain with trivial intersection.

\section{Basic results on rank gradient and cost} \label{basic} 
Here we collect the results about cost and rank gradient we use. While the independence of rank gradient on the chain can be deduced from the fact that a group has fixed price the direct argument for rank gradient is almost the same and we prefer to indicate these elementary techniques as well since the reader may not be familiar with the theory of cost.

\begin{proposition} \label{center} Let $G$ be a group containing a finitely generated normal subgroup $C$ whose rank gradient vanishes with respect to any chain. Then $RG (G,(H_i))=0$ for any chain of subgroups $H_i$ satisfying $(\cap_i H_i) \cap C =1$. 
\end{proposition}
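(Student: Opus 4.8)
The plan is to transfer the problem to $C$: I would bound $d(H_i)$ from above in terms of $d(C\cap H_i)$ together with a Reidemeister--Schreier contribution coming from the quotient $G/C$, and then invoke the hypothesis on $C$. We may assume $C$ is infinite (so that $G$ is infinite), and since $G$ is finitely generated so is $G/C$. Put $N_i:=C\cap H_i$. Because $H_i$ is normal of finite index in $G$ and $C\trianglelefteq G$, each $N_i$ is a normal subgroup of $C$ of finite index (indeed $[C:N_i]\le [G:H_i]$), the $N_i$ form a descending chain, each $N_i$ is finitely generated because $C$ is, and $\bigcap_i N_i=(\bigcap_i H_i)\cap C=1$. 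Thus $(N_i)$ is a chain of finite-index normal subgroups of $C$ with trivial intersection, so by hypothesis $RG(C,(N_i))=0$, i.e.\ $r(C,N_i)\to 0$; moreover $[C:N_i]\to\infty$ since $C$ is infinite and the $N_i$ intersect trivially.

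The main step is the generating estimate. For each $i$ there is a short exact sequence $1\to N_i\to H_i\to H_i/N_i\to 1$ with $H_i/N_i\cong H_iC/C\le G/C$; lifting a minimal generating set of $H_i/N_i$ into $H_i$ and adjoining one for $N_i$ yields $d(H_i)\le d(H_i/N_i)+d(N_i)$. Since $H_iC/C$ has index $[G:H_iC]$ in the finitely generated group $G/C$, the Reidemeister--Schreier bound gives $d(H_i/N_i)\le [G:H_iC]\,d(G/C)$. Feeding these in, and using $[G:H_i]=[G:H_iC]\,[C:N_i]$ (second isomorphism theorem), $[G:H_iC]\ge 1$, and $d(N_i)\ge 1$ (as $N_i$ is infinite), I obtain
\[ r(G,H_i)=\frac{d(H_i)-1}{[G:H_i]}\ \le\ \frac{d(G/C)}{[C:N_i]}+\frac{d(N_i)-1}{[G:H_iC]\,[C:N_i]}\ \le\ \frac{d(G/C)}{[C:N_i]}+r(C,N_i). \]
As $i\to\infty$ both terms on the right tend to $0$ by the previous paragraph, while $r(G,H_i)\ge 0$ because $H_i$ is infinite; the squeeze then gives $RG(G,(H_i))=0$.

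I do not expect a genuine obstacle here: the argument rests entirely on the extension inequality $d(H_i)\le d(H_i/N_i)+d(N_i)$ and a Schreier bound for finite-index subgroups. The only points needing a little care are (i) the harmless bookkeeping that rules out a finite $C$ — if $C=1$ there is nothing to prove, while a nontrivial finite $C$ fails the hypothesis once eventually-constant chains are permitted (the chain $C\ge 1\ge 1\ge\cdots$ has rank gradient $-1/|C|$), which is the convention I adopt — and (ii), should one insist that chains be strictly decreasing, passing from $(N_i)$ to its subchain of distinct terms, which is still infinite and still has trivial intersection because $C$ is infinite, so the hypothesis still applies and $r(C,N_i)\to 0$ along the original sequence.
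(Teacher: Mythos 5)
Your argument is correct and is essentially the paper's own proof: both rest on the bound $d(H_i)\le d(C\cap H_i)+d(H_iC/C)$ together with the Schreier estimate $d(H_iC/C)\le d(G)[G:H_iC]$, divided through by $[G:H_i]=[G:H_iC][C:C\cap H_i]$ and sent to the limit using the vanishing rank gradient of $C$. The only (cosmetic) difference is that you use $d(G/C)$ where the paper uses the cruder $d(G)$, and you spell out the bookkeeping forcing $C$ to be infinite, which the paper merely asserts.
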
 

\proof Note that $C$ is infinite and $d(H) \leq d(C \cap H_i)+ d(H_i/(C \cap H_i))$. Clearly, $d(H_i/(C \cap H_i))=d(H_iC/C) \leq d(G)[G: H_iC]$. We have $$r(G,H_i) \leq  \frac{d(C \cap H_i)+d(G)[G: H_iC]}{[G:H_iC][C:C\cap H_i]}.$$ As $\cap_i (H_i \cap C) =1$ and rank gradient for $C$ vanishes with respect to any chain, we deduce the right hand side of the above inequality tends to zero, which implies that $\lim_{i \rightarrow \infty} r(G,H_i)=0$ as claimed. $\square$
\medskip

\begin{proposition} \label{amalgam} Let $G=A*_C B$ be an amalgam of two finitely generated groups over a finite subgroup $C$. Let $(H_i)$ be a normal chain in $G$ such that $C \cap (\cap_i H_i)=\{1\}$. Then \[ RG(G,(H_i))= RG( A, (A\cap H_i))+ RG(B, (B \cap H_i)) + 1/|C|.\]
\end{proposition}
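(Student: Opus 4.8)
The plan is to compute $r(G,H_i)$ using the Bass--Serre theory for the amalgam $G = A *_C B$, which acts on a tree $T$ whose edge stabilizers are conjugates of $C$ and whose vertex stabilizers are conjugates of $A$ and $B$. For a finite-index normal subgroup $H = H_i$, the quotient graph of groups $H \backslash T$ has vertex groups that are the intersections of $H$ with the conjugates of $A$ and of $B$, and edge groups that are intersections of $H$ with conjugates of $C$. Since $C \cap (\cap_i H_i) = \{1\}$ and $C$ is finite, for all large $i$ we have $H_i \cap C = \{1\}$, and by normality $H_i$ meets every conjugate of $C$ trivially as well; hence for large $i$ the group $H_i$ acts on $T$ with trivial edge stabilizers, i.e. $H_i$ is the fundamental group of a graph of groups with trivial edge groups — equivalently $H_i = \pi_1(\Gamma_i) * (\text{free product of the vertex groups})$, where $\Gamma_i = H_i \backslash T$ is a finite graph. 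Actually more precisely $H_i$ is a free product of the vertex groups of $H_i\backslash T$ with a free group of rank equal to the first Betti number $b_1(\Gamma_i)$ of the quotient graph.

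Next I would count. By Grushko's theorem, $d(H_i) = b_1(\Gamma_i) + \sum d(\text{vertex groups})$. The vertex groups lying over the $A$-vertex of $G\backslash T$ are exactly the subgroups $A \cap H_i^{g}$ (up to conjugacy in $H_i$), and they number $[G : H_i A]$; there is a bijection making $\sum_{A\text{-vertices}} (d(A\cap H_i^g) - 1) = [G:H_iA]\,(d(A \cap H_i)-1)\cdot$ (appropriate index bookkeeping) — I will write this as $[G:H_iA]$ copies each contributing $d(A\cap H_i)$, and similarly for $B$. Then I would compute $b_1(\Gamma_i) = E_i - V_i + 1$ where $\Gamma_i$ has $V_i = [G:H_iA] + [G:H_iB]$ vertices and $E_i = [G:H_iC]$ edges. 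Assembling these, $d(H_i) - 1 = (E_i - V_i) + [G:H_iA]\,d(A\cap H_i) + [G:H_iB]\,d(B\cap H_i)$, and dividing by $[G:H_i]$, using $[G:H_i] = [G:H_iA][A:A\cap H_i] = [G:H_iB][B:B\cap H_i] = [G:H_iC][C]$ (the last since $H_i\cap C=1$), yields
\[
r(G,H_i) = r(A, A\cap H_i) + r(B, B\cap H_i) + \frac{1}{|C|}.
\]
Taking $i \to \infty$ gives the claimed identity. One subtlety to be careful about: a priori one only knows $d(H_i)\le$ the Grushko sum from a presentation, so to get equality I need Grushko's theorem, which is legitimate since a free product decomposition of $H_i$ is genuinely what Bass--Serre theory provides; alternatively, for the inequality direction $RG(G,(H_i)) \le \cdots$ one only needs the easy bound, and the reverse inequality $\ge$ needs Grushko — I would state clearly that equality in each $d(H_i)$ holds by Grushko.

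The main obstacle I anticipate is the bookkeeping of conjugacy classes and indices: making precise that the $A$-type vertices of $H_i\backslash T$ biject with double cosets $H_i \backslash G / A$ and that along this bijection the stabilizer of a vertex is conjugate to $A \cap H_i$ with the right multiplicities, so that the sum $\sum (d(\cdot) - 1)$ over those vertices equals $[G:H_iA](d(A\cap H_i) - 1)$ — this uses normality of $H_i$ in $G$ crucially (so that all the relevant intersections $A \cap H_i^g = (A^{g^{-1}} \cap H_i)^g$ are isomorphic to an intersection of $H_i$ with a conjugate of $A$, and by normality these are the conjugates $A'\cap H_i$ for various vertex groups $A'$ in the orbit). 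Once the vertex and edge counts are pinned down, the rest is the index arithmetic above. For the statement about fixed price / cost one could alternatively invoke Gaboriau's formula for cost of amalgams over finite (even amenable) subgroups, but since the proposition is phrased purely in terms of rank gradient, the Bass--Serre/Grushko computation is the cleanest route.
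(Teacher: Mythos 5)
Your proposal is correct and follows essentially the same route as the paper: decompose $H_i$ via its action on the Bass--Serre tree into a free product of conjugates of $A\cap H_i$ and $B\cap H_i$ with a free group of rank $b_1(H_i\backslash T)$, apply the Grushko--Neumann theorem to get $d(H_i)-1=[G:H_iA](d(A\cap H_i)-1)+[G:H_iB](d(B\cap H_i)-1)+[G:H_i]/|C|$, and divide by $[G:H_i]$. Your bookkeeping of vertex and edge orbits (and the use of normality of $H_i$ to identify all the vertex stabilizers in an orbit with $A\cap H_i$ or $B\cap H_i$) is exactly the ``easy computation'' the paper leaves implicit.
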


\proof This is an easy computation with the Bass-Serre tree $T$ of $G$. The key observation is that for every normal subgroup $H_j$ such that $H_j \cap C=\{1\}$ we have that $H_j$ is a free product of several copies of $A \cap H_j$, $B \cap H_j$ and a free group equal to the fundamental  group of the graph $H_j \backslash T$. Applying the Grushko-Neumann theorem to $H_j$ shows that 
\[d(H_j)-1= [G:H_jA](d(A \cap H_j)-1) + [G:H_jB](d(B \cap H_j)-1) + \frac{[G:H_j]}{|C|} \] giving the result. 
$\square$
\medskip

\begin{proposition} \cite[Prop. 9]{AJN} \label{generation} Let $G$ be a group generated by two subgroups $A$ and $B$ such that $C:=A \cap B$ is infinite. Then 
\[ RG(G,(H_i)) \leq RG(A, (A\cap H_i))+ RG(B, (B\cap H_i)) \] for any normal chain $(H_i)$ in $G$ such that $[C: (C \cap H_j)] \rightarrow \infty$. 
\end{proposition}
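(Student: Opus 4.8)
The plan is to reduce everything to Bass--Serre theory for the amalgam $\widetilde G := A *_C B$. The key point is that although $G$ itself need not be this amalgam, the inclusions of $A$ and of $B$ into $G$ agree on $C$ and jointly generate $G$, so there is a canonical surjection $\phi\colon\widetilde G\twoheadrightarrow G$ restricting to the identity on $A$ and on $B$. Finite-index subgroups of $G$ are then quotients of finite-index subgroups of $\widetilde G$, and that is enough since we only want an upper bound on ranks. First I would set $\widetilde H_i := \phi^{-1}(H_i)$, a normal chain of finite-index subgroups of $\widetilde G$ containing $\ker\phi$. Then $\phi$ induces isomorphisms $\widetilde G/\widetilde H_i\cong G/H_i$, so $[\widetilde G:\widetilde H_i]=[G:H_i]$ and, taking the images of $A,B,C$ in these quotients, also $[\widetilde G:\widetilde H_iA]=[G:H_iA]$, $[\widetilde G:\widetilde H_iB]=[G:H_iB]$, $[\widetilde G:\widetilde H_iC]=[G:H_iC]$. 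Since $\phi$ is the identity on $A,B,C$ we get $\widetilde H_i\cap A\cong H_i\cap A$ (and similarly for $B,C$), and from surjectivity $d(H_i)\le d(\widetilde H_i)$.

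Next I would run the standard Bass--Serre count for the action of $\widetilde H_i$ on the tree $T$ of $\widetilde G=A*_C B$ --- the same computation as in the proof of Proposition \ref{amalgam}, except that now $\widetilde H_i\cap C$ need not be trivial, so $\widetilde H_i$ is the fundamental group of a graph of groups with (possibly) nontrivial edge groups and one only obtains an inequality. The quotient graph $\widetilde H_i\backslash T$ is connected with $[G:H_iA]+[G:H_iB]$ vertices and $[G:H_iC]$ edges, hence first Betti number $[G:H_iC]-[G:H_iA]-[G:H_iB]+1$; normality of $\widetilde H_i$ makes every vertex group conjugate in $\widetilde G$ to $H_i\cap A$ or to $H_i\cap B$; and a fundamental group of a finite connected graph of groups is generated by its vertex groups together with one element per edge outside a chosen spanning tree. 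Combining these facts with $d(H_i)\le d(\widetilde H_i)$ should give
\[
d(H_i)-1\ \le\ [G:H_iA]\bigl(d(H_i\cap A)-1\bigr)+[G:H_iB]\bigl(d(H_i\cap B)-1\bigr)+[G:H_iC].
\]

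Dividing through by $[G:H_i]$ and simplifying each ratio via $[G:H_iA]/[G:H_i]=[A:A\cap H_i]^{-1}$ and its analogues for $B$ and $C$ turns this into
\[
r(G,H_i)\ \le\ r(A,A\cap H_i)+r(B,B\cap H_i)+\frac{1}{[C:C\cap H_i]},
\]
and letting $i\to\infty$ the last term vanishes by hypothesis, which yields the asserted inequality of rank gradients.

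The main obstacle --- really the only conceptual content --- is the passage from $G$ to the amalgam $\widetilde G$: because $G$ is merely generated by $A$ and $B$ rather than equal to $A*_C B$, one cannot apply Bass--Serre theory directly to $G$, and the price is that one gets an inequality rather than the exact formula of Proposition \ref{amalgam}. Beyond that, the only points requiring care are the index identities $[\widetilde G:\widetilde H_iA]=[G:H_iA]$ and their siblings (which rest on $\ker\phi\subseteq\widetilde H_i$) and the connectedness of $\widetilde H_i\backslash T$, needed so that the count $E-V+1$ correctly computes the rank of the free part of the decomposition; everything else is the routine graph-of-groups/Reidemeister--Schreier bookkeeping already present in the proof of Proposition \ref{amalgam}.
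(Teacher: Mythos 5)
Your argument is correct, and it is worth noting that the paper itself offers no proof of this statement: it is quoted verbatim from \cite[Prop.~9]{AJN}. Your route is the natural extension of the paper's own Bass--Serre computation for Proposition~\ref{amalgam}: pass to the amalgam $\widetilde G=A*_C B$, pull the chain back along the canonical surjection $\phi\colon\widetilde G\to G$ (which exists because the two inclusions agree on $C$ and is onto because $A,B$ generate $G$), and bound $d(H_i)\le d(\widetilde H_i)$ by the standard generating set of the induced graph-of-groups decomposition of $\widetilde H_i$, namely the vertex groups plus one generator per edge outside a spanning tree. The points you flag are exactly the ones that need checking and they all go through: $\ker\phi\subseteq\widetilde H_i$ gives $[\widetilde G:\widetilde H_iA]=[G:H_iA]$ and its analogues; injectivity of $\phi$ on $A$, $B$, $C$ gives $\widetilde H_i\cap A=H_i\cap A$, etc.; connectedness of $T$ gives connectedness of $\widetilde H_i\backslash T$, so the free part has rank $[G:H_iC]-[G:H_iA]-[G:H_iB]+1$; and normality of $\widetilde H_i$ identifies all vertex groups with conjugates of $H_i\cap A$ or $H_i\cap B$. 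The presence of nontrivial edge groups and of the extra relations coming from $\ker\phi$ is precisely why one only gets an inequality here, in contrast to the Grushko--Neumann equality in Proposition~\ref{amalgam}, and the hypothesis $[C:C\cap H_j]\to\infty$ is used exactly once, to kill the residual term $1/[C:C\cap H_i]$ after dividing by $[G:H_i]$. This is a valid self-contained substitute for the citation, in the same spirit as (and in fact a common generalisation of) the amalgam case proved in the paper.
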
 \bigskip

The cost analogues of the results above are collected in the following Proposition. It summarises some of the results of \cite{Ga} (see also \cite{KM} 31.2, 32.1, 35.1, 35.3 and 36.1).
We denote the cost of a group $G$ by $c(G)$.

\begin{proposition} \label{cost} Let $G$ be a countable group. The following results hold.\medskip

(i) If $G$ is finite then $G$ has fixed price equal to $1-|G|^{-1}$. \medskip

(ii) If $G$ is infinite amenable or has infinite centre then $G$ has fixed price 1. \medskip

(iii) Suppose that $G=\langle A,B\rangle$ is generated by two subgroups $A$ and $B$ with $A \cap B$ infinite. If both $A$ and $B$ have fixed price 1 then so does $G$.
\medskip

(iv) Suppose that $G=A \star _C B$ with amenable amalgamated subgroup $C$. If both $A$ and $B$ have fixed price then $G$ has fixed price equal to $c(A)+c(B)-c(C)$. 
\end{proposition}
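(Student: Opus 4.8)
The plan is to deduce all four items from Gaboriau's foundational results on cost \cite{Ga} (equivalently the exposition in \cite{KM}); the only work beyond citation is the bookkeeping that upgrades statements about cost to statements about \emph{fixed} price.

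For (i), any essentially free p.m.p.\ action of a finite group $G$ yields a finite equivalence relation with almost every class of size $|G|$, and such a relation has cost exactly $1-|G|^{-1}$; since every free action has this form, the price is fixed. For (ii), the case of an infinite amenable group is the Ornstein--Weiss/Gaboriau theorem that every p.m.p.\ action of such a group is hyperfinite with cost $1$. If instead $Z:=Z(G)$ is infinite, I would split on $[G:Z]$: when $[G:Z]<\infty$ the group $G$ is virtually abelian, hence infinite amenable; when $[G:Z]=\infty$, $Z$ is an infinite amenable normal subgroup of infinite index, and Gaboriau's result for such subgroups gives $c(G)\le 1$, while $c(G)\ge 1$ because $G$ is infinite, so again every free action has cost exactly $1$.

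For (iii) the key input is the subadditivity estimate: if a p.m.p.\ relation $\mathcal{R}$ is generated by subrelations $\mathcal{R}_1,\mathcal{R}_2$ whose intersection has infinite classes almost everywhere, then $c(\mathcal{R})\le c(\mathcal{R}_1)+c(\mathcal{R}_2)-c(\mathcal{R}_1\cap\mathcal{R}_2)$. For a free $G=\langle A,B\rangle$-action this applies because $A\cap B$ is infinite, and since $A,B$ have fixed price $1$ and $c(\mathcal{R}_1\cap\mathcal{R}_2)\ge 1$, we get $c(\mathcal{R})\le 1$; with $c(\mathcal{R})\ge 1$ this forces cost exactly $1$, i.e.\ fixed price $1$. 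Item (iv) is Gaboriau's computation of the cost of an amalgam over an amenable subgroup: working with the Bass--Serre tree of $G=A\star_C B$ and using amenability of $C$ (so that its contribution along the tree is computed exactly, not merely bounded) one gets $c(\mathcal{R})=c(A)+c(B)-c(C)$ for every free action, hence fixed price $c(A)+c(B)-c(C)$.

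The substantive obstacles all lie inside Gaboriau's theory and are taken as given here: the cost-$1$ phenomenon for infinite amenable groups, the subadditivity formula under generation by subgroups with infinite intersection, and the exact additivity formula for amalgams over amenable subgroups. Reproving any of these would require developing the machinery of graphings, treeings, and the behaviour of cost under restriction and induction, which is far outside the scope of this note; accordingly I would simply cite \cite{Ga} and the relevant parts of \cite{KM} (31.2, 32.1, 35.1, 35.3, 36.1) and present only the short arguments above.
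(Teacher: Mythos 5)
Your proposal is correct and takes essentially the same route as the paper: the paper offers no proof of this proposition at all, presenting it as a summary of Gaboriau's results with the citation to \cite{Ga} and \cite{KM} (31.2, 32.1, 35.1, 35.3, 36.1), which is precisely what you do, plus some harmless extra bookkeeping. (One minor caveat: the generation estimate in (iii) is usually stated as $c(\mathcal{R})\le c(\mathcal{R}_1)+c(\mathcal{R}_2)-1$ when the intersection is aperiodic, rather than with $-c(\mathcal{R}_1\cap\mathcal{R}_2)$, but that weaker form already suffices for your argument since $c(\mathcal{R}_1\cap\mathcal{R}_2)\ge 1$.)
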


\section{Artin groups}\label{Artingrp} The first family we consider are the Artin groups. We recall their definition below. Given a graph $\Gamma$ with edges labelled by integers $\geq 2$, the Artin group $A_\Gamma$ is the group with presentation given by a set of generators $a_v$ where $v$ ranges over the set of vertices of $\Gamma$ and relations 
\[ \underbrace{a_va_wa_v \cdots}_n = \underbrace{a_wa_va_w \cdots }_n \] for every edge labelled $n \in \mathbb N $ joining pair of vertices $v,w$ of $\Gamma$. 

A basic fact about Artin groups says that if one takes any subset $W$ of the vertices $V$ of the defining graph, then the subgroup generated by $W$ in $A_\Gamma$ is precisely the Artin group $A_\Omega$, where $\Omega$ is the subgraph generated by $W$ in $\Gamma$ (see \cite{Lek}). 

We note that it is unknown whether all Artin groups are residually finite. Let $G_0$ denote the intersection of all finite index subgroups of a group $G$. For an Artin group $A=A_\Gamma$ we consider normal chains $(H_i)$ in $A$ with $\cap H_i= A_0$.

Our result is:

\begin{theorem} \label{artin} Let $A=A_\Gamma$ be an Artin group and let $b$ be the number of connected components of $\Gamma$. Suppose that $(H_i)$ is a normal chain with intersection $A_0$. Then $RG(A,(H_i))=b-1$. Moreover $A$ has fixed price equal to $b$.
\end{theorem}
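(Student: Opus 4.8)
The plan is to reduce the general case to two sub-cases: first the case where $\Gamma$ is connected (so $b=1$), where we must show $RG(A,(H_i))=0$ and $c(A)=1$; and then the disconnected case, which we build up from the connected pieces using the free-product structure. For the connected case, the key structural observation is that if $\Gamma$ has at least one edge, then there are two vertices $v,w$ joined by an edge labelled $n\geq 2$, and the relation $\underbrace{a_va_wa_v\cdots}_n=\underbrace{a_wa_va_w\cdots}_n$ produces a nontrivial element lying in the dihedral-type Artin subgroup $\langle a_v,a_w\rangle$. The strategy is to exhibit $A$ as generated by two proper special subgroups $A_\Omega$ and $A_{\Omega'}$ whose intersection is an Artin subgroup $A_{\Omega\cap\Omega'}$ that is \emph{infinite}, and then to run an induction on the number of vertices: by Proposition \ref{generation} (for rank gradient) and Proposition \ref{cost}(iii) (for cost), it suffices that the pieces have the desired property and that the overlap is infinite. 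The base of the induction is a single dihedral Artin group $A_{v,w}$ on an edge labelled $n$; here one uses that $A_{v,w}$ has infinite centre — generated by $(a_va_w)^{n/2}$ if $n$ is even and by $(a_va_wa_v\cdots)_n^{2}$ appropriately when $n$ is odd — so Proposition \ref{cost}(ii) and the corresponding rank gradient statement (or Proposition \ref{center}) give fixed price $1$ and rank gradient $0$, provided the chain cuts the centre down, which it does since $\cap_i H_i = A_0$ and a residually finite infinite cyclic group is cut by any such chain.

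Concretely, for connected $\Gamma$ on vertex set $V$ with $|V|\geq 2$, pick an edge $\{v,w\}$ and pick any third vertex $u$ adjacent (in $\Gamma$) to the component containing $v,w$ — or if $|V|=2$ we are in the base case. Write $A=\langle A_{V\setminus\{u\}},\ A_{\text{star}(u)\cup\{v,w\}}\rangle$, or more robustly: since $\Gamma$ is connected one can always split $V=V_1\cup V_2$ with $V_1,V_2\subsetneq V$, $V_1\cup V_2=V$, and $V_1\cap V_2$ spanning a subgraph containing the edge $\{v,w\}$, so that $A_{V_1}\cap A_{V_2}=A_{V_1\cap V_2}\supseteq A_{v,w}$ is infinite (using the basic fact about special subgroups quoted in the excerpt, together with the fact that the intersection of special subgroups is the special subgroup on the intersection of vertex sets — this is also in \cite{Lek}). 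By induction on $|V|$ both $A_{V_1}$ and $A_{V_2}$ have fixed price $1$ and rank gradient $0$ with respect to the induced chains, which have trivial-modulo-core intersection since $\cap_i(A_{V_j}\cap H_i)\subseteq A_{V_j}\cap A_0 = (A_{V_j})_0$; and $A_{V_1\cap V_2}$ is infinite, so Proposition \ref{generation} gives $RG(A,(H_i))\leq 0$, hence $=0$, while $0 \le RG$ is automatic, and Proposition \ref{cost}(iii) gives $c(A)=1$. For the fully disconnected case, if $\Gamma$ has components $\Gamma_1,\dots,\Gamma_b$ then $A=A_{\Gamma_1}*\cdots*A_{\Gamma_b}$ is a free product; applying Proposition \ref{amalgam} repeatedly with $C=\{1\}$ (so $|C|=1$) to this free product, together with $RG(A_{\Gamma_j},\cdot)=0$ and $c(A_{\Gamma_j})=1$ from the connected case, yields $RG(A,(H_i))=\sum_j 0 + (b-1)\cdot 1 = b-1$ and $c(A)=\sum_j 1 = b$ (equivalently via $RG=c-1$), noting that a free product of groups with fixed price has fixed price.

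The main obstacle, and the point that needs the most care, is verifying that the overlap subgroup $A_{V_1\cap V_2}$ is genuinely infinite and that one can always choose such a splitting with the overlap containing an edge — this is where connectedness of $\Gamma$ is used in an essential way, since a single-vertex Artin group $\mathbb{Z}$ is infinite but two single vertices with no edge between them generate a free group $F_2$ whose intersection is trivial, which is exactly why the theorem fails to give price $1$ in the disconnected case. A secondary technical point is checking the hypothesis $[C:(C\cap H_j)]\to\infty$ in Proposition \ref{generation}: since $C=A_{V_1\cap V_2}$ contains an infinite-order element $x$ (e.g.\ $a_va_w$ inside a dihedral Artin subgroup) and $\cap_i H_i = A_0$ contains no finite-index-detectable powers of $x$ — more precisely $\langle x\rangle$ is residually finite, hence $\langle x\rangle\cap A_0=\{1\}$ — the indices $[C:C\cap H_j]$ are unbounded, so the hypothesis holds. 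One should also confirm that Proposition \ref{amalgam}'s hypothesis "$C\cap(\cap_i H_i)=\{1\}$" is met in the free-product application, which is immediate as $C=\{1\}$.
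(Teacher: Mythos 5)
Your overall strategy — reduce to connected $\Gamma$, induct on $|V|$ by exhibiting $A_\Gamma$ as generated by two special subgroups with infinite intersection, then assemble the disconnected case as a free product via Propositions \ref{amalgam} and \ref{cost}(iv) — is the same as the paper's, and your bookkeeping for the disconnected case is correct. But two steps do not go through as written. The more serious one is the claim that the chain cuts down the relevant cyclic subgroups: you assert $\langle x\rangle\cap A_0=\{1\}$ ``since $\langle x\rangle$ is residually finite'', and likewise that the infinite cyclic centre of a dihedral Artin group ``is cut by any such chain'' for the same reason. This is a non sequitur. $A_0$ is the intersection of the finite-index subgroups of $A$, and residual finiteness of $\langle x\rangle$ as an abstract group says nothing about whether those subgroups separate the elements of $\langle x\rangle$; indeed it is unknown whether Artin groups are residually finite, which is exactly why this point needs a real argument. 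The paper supplies one: the homomorphism $\pi:A_\Gamma\to\mathbb Z$ sending every $a_v$ to $1$ satisfies $A_0\le\ker\pi$ (because $\mathbb Z$ is residually finite), and each vertex subgroup $\langle a_v\rangle$ and each edge centre $Z_e$ (generated by $(a_va_w)^{n/2}$ or $(a_va_w)^n$) meets $\ker\pi$ trivially, hence meets $\cap_i H_i$ trivially. Without this, your base case and your verification of the hypothesis $[C:C\cap H_j]\to\infty$ in Proposition \ref{generation} are unsupported.

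The second gap is combinatorial: the ``robust'' splitting $V=V_1\cup V_2$ with $V_1,V_2\subsetneq V$ and $V_1\cap V_2$ containing both endpoints of an edge does not always exist. If $\{v,w\}\subseteq V_1\cap V_2$ and $|V|=3$, properness forces $V_1=V_2=\{v,w\}$, whose union is not $V$; so a triangle or a path on three vertices already defeats it. Moreover you need $V_1$ and $V_2$ to span \emph{connected} subgraphs for your induction hypothesis to give them price $1$, and you never arrange this. Both problems vanish once you notice that an overlap consisting of a single vertex suffices, since $A_{\{w\}}\cong\mathbb Z$ is already infinite: this is what the paper does, taking $V_1=\{v,w\}$ and $V_2=V\setminus\{v\}$ when deleting $v$ leaves $\Gamma$ connected, and, when $v$ is a cut vertex, decomposing $A_\Gamma$ as an amalgam of the groups $A_{\Gamma_j\cup\{v\}}$ (each on a connected graph) over $A_v$. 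Finally, for the induced chains in the induction you should carry along the vertex/edge-centre condition above rather than the unproved identity $A_{V_j}\cap A_0=(A_{V_j})_0$, whose nontrivial inclusion does not follow from anything you have said.
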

\proof Note that a free product of residually finite groups is residually finite, therefore in a
 free product of two groups $G=C * D$ one has $C \cap G_0=C_0$ and $D \cap G_0=D_0$. In view of Propositions \ref{amalgam} and \ref{cost} (iv) we only need to prove that $RG(A_\Gamma,(H_i))=0$ and that $A_\Gamma$ has fixed price 1 in the special case when $\Gamma$ is connected.
 
 We begin with an observation: Let $\pi : A_\Gamma \rightarrow \mathbb Z$ be the homomorphism sending all generators $a_v$ of $A$ to the canonical generator of $\mathbb Z$. Clearly $\ker \pi \geq A_0$ and therefore if $T$ is any cyclic subgroup of $A$ with $T \cap \ker \pi =\{1\}$ then $T \cap A_0 = T \cap (\cap_i H_i)=1$. In particular this applies to all vertex subgroups $\langle a_v \rangle $ of $A$. Let $e=(v,w)$ be an edge joining vertices $v$ and $w$ labelled by some integer $n>1$ of $\Gamma$. The subgroup $A_e=\langle a_v,a_w \rangle$ has infinite cyclic center $Z_e=Z(A_e)$ generated by $(a_va_w)^{n/2}$ if $n$ is even and by  $(a_va_w)^n$ if $n$ is odd. In both cases we have that $Z_e \cap \ker \pi = \{1\}$ and therefore $Z_e \cap (\cap H_i)=\{1\}$. The following Lemma is the crux of the proof of the Theorem. 
 
\begin{lemma}\label{Artin} Let $A_\Gamma$ be an Artin group, where $\Gamma$ is a connected graph. Then $A_\Gamma$ has fixed price 1. Moreover, let $(H_i)$ be a normal chain in $A_\Gamma$ such that $A_v \cap (\cap H_i)=\{1\}$ for every vertex $v\in V\Gamma$ and $Z_e \cap (\cap H_i)=\{1\}$ for every edge $e \in E\Gamma$. Then, $RG(A_\Gamma, (H_i)) = 0$. 
\end{lemma}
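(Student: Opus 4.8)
The plan is to argue by induction on the number $k=|V\Gamma|$ of vertices of the connected graph $\Gamma$, combining Propositions~\ref{center}, \ref{generation} and \ref{cost} with the basic fact (cited from \cite{Lek}) that for an induced subgraph $\Omega\subseteq\Gamma$ the subgroup generated by the corresponding generators is canonically $A_\Omega$; in particular every vertex group $A_v=\langle a_v\rangle$ is infinite cyclic. The base cases are handled directly. If $k=1$ then $A_\Gamma\cong\mathbb Z$ has fixed price $1$ by Proposition~\ref{cost}(ii), and $RG(\mathbb Z,(H_i))=0$ trivially since $d(H_i)=1$ for every finite-index subgroup. If $k=2$ then $A_\Gamma=A_e$ for the single edge $e$; it has infinite centre $Z_e$, hence fixed price $1$ by Proposition~\ref{cost}(ii), and since $Z_e$ is a finitely generated normal subgroup with vanishing rank gradient along every chain and $Z_e\cap(\cap_iH_i)=1$ by hypothesis, Proposition~\ref{center} gives $RG(A_e,(H_i))=0$.

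For the inductive step ($k\ge 3$) I would fix a spanning tree of $\Gamma$, let $w$ be one of its leaves and $v$ the neighbour of $w$ along the corresponding edge $e=(v,w)$. Then $\Gamma_1:=\Gamma\setminus\{w\}$ (the full subgraph on the remaining vertices) is connected with $k-1\ge 2$ vertices, and with $\Gamma_2:=e$ we have $A_\Gamma=\langle A_{\Gamma_1},A_{\Gamma_2}\rangle$ since these two subgroups jointly contain all the generators $a_u$. The chains $(H_i\cap A_{\Gamma_1})$ in $A_{\Gamma_1}$ and $(H_i\cap A_e)$ in $A_e$ inherit the hypotheses of the Lemma, because any vertex group or edge centre lying inside $A_{\Gamma_1}$, and likewise $Z_e$, meets $\cap_iH_i$ trivially; so by the base case and the induction hypothesis both $A_{\Gamma_1}$ and $A_e$ have fixed price $1$ and vanishing rank gradient along these chains. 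Since $\langle a_v\rangle\cong\mathbb Z$ sits inside $C:=A_{\Gamma_1}\cap A_e$, the group $C$ is infinite, whence $A_\Gamma$ has fixed price $1$ by Proposition~\ref{cost}(iii). For the rank gradient, the decreasing chain of subgroups $\langle a_v\rangle\cap H_i$ of $\mathbb Z\cong\langle a_v\rangle$ has trivial intersection (by the hypothesis on $A_v$), so its indices tend to infinity; since $[C:C\cap H_i]\ge[\langle a_v\rangle:\langle a_v\rangle\cap H_i]$ the hypothesis $[C:C\cap H_i]\to\infty$ of Proposition~\ref{generation} holds, and that proposition yields $RG(A_\Gamma,(H_i))\le RG(A_{\Gamma_1},(H_i\cap A_{\Gamma_1}))+RG(A_e,(H_i\cap A_e))=0$. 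As rank gradient is non-negative, this closes the induction.

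The main thing to get right is engineering a decomposition $\Gamma=\Gamma_1\cup\Gamma_2$ with $\Gamma_1$ connected and strictly smaller while keeping $A_{\Gamma_1}\cap A_{\Gamma_2}$ infinite; taking $\Gamma_2$ to be a single edge hanging at a leaf of a spanning tree forces the overlap to contain a vertex group $\mathbb Z$, which supplies exactly the infiniteness needed to apply Propositions~\ref{generation} and \ref{cost}(iii). The two-vertex case is the one place where this trick degenerates — removing $w$ leaves $\langle A_{\Gamma_1},A_e\rangle=A_e=A_\Gamma$, making the reduction circular — which is why $k=2$ must be treated separately via the infinite centre $Z_e$ of the dihedral Artin group.
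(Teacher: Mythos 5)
Your proof is correct and follows essentially the same inductive scheme as the paper's: induction on $|V\Gamma|$, identical base cases via Propositions \ref{center} and \ref{cost}(ii), and an inductive step that writes $A_\Gamma=\langle A_{\Gamma'},A_e\rangle$ for a connected induced subgraph $\Gamma'$ and an edge $e$, with the overlap containing an infinite cyclic vertex group, then invokes Propositions \ref{generation} and \ref{cost}(iii). The one genuine difference is your choice of the deleted vertex as a leaf of a spanning tree, which guarantees the remaining induced subgraph is connected. The paper instead deletes an arbitrary vertex $v$ and must split into two cases: when $\Gamma\setminus\{v\}$ is disconnected it decomposes $A_\Gamma$ as an amalgam of the groups $A_{\Gamma_j\cup\{v\}}$ over $A_v\cong\mathbb Z$ and appeals to Propositions \ref{amalgam} and \ref{cost}(iv); only in the non-disconnecting case does it use your decomposition. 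Your variant is slightly cleaner: it eliminates the disconnecting case entirely, and in particular avoids invoking Proposition \ref{amalgam} (stated for amalgams over \emph{finite} subgroups) for an amalgam over an infinite cyclic group. Your verification of the hypothesis $[C:C\cap H_i]\to\infty$ of Proposition \ref{generation} via $\langle a_v\rangle\le C$ and $A_v\cap(\cap_i H_i)=1$ is exactly the point the lemma's hypotheses are designed for, and is handled correctly.
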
 

\proof [Proof of Lemma \ref{Artin}] The Lemma is proved by induction on the number of vertices $n=|V\Gamma|$. If $n=1$, $A_\Gamma \cong \mathbb{Z}$ and if $n=2$, then the centre of $A_\Gamma$ is infinite cyclic. In both cases the result follows directly from Propositions \ref{center} and \ref{cost} (ii). We may assume that the number $n$ of vertices is at least 3 and that the Lemma holds for all graphs with fewer vertices. 

Let $v \in V\Gamma$. If removing $v$ disconnects the graph $\Gamma$, set $\Gamma_j, j=1,\ldots,k$ to be the connected components of $\Gamma \backslash \{v\}$. Then, the original Artin group $A_\Gamma$ is an amalgam of the subgroups $A_{\Gamma_j \cup \{v\}}$, $j=1,\ldots,k$, along the subgroup $A_v$. As $k \geq 2$, the induction hypothesis applies to each of the components $A_{\Gamma_j \cup \{v\}}$ and the Lemma follows in this case from Propositions \ref{amalgam} and \ref{cost} (iv). 

Suppose then that removing $v$ does not disconnect the graph. Let $e=(v,w)$ be an edge. Clearly, $A_e$ and $A_{\Gamma \backslash \{v\}}$ generate the group $A_\Gamma$. Moreover $A_e$ and $A_{\Gamma \backslash \{v\}}$ intersect in the subgroup $A_w$. The proof of the Lemma is now completed by applying the induction hypothesis to the subgroup $A_{\Gamma \backslash \{v\}}$ and invoking Propositions \ref{generation} and \ref{cost} (iii). 
$\square$

The Theorem provides an elementary proof for the following result which is well-known in the case of right-angled Artin groups (where all the $L^2$-Betti numbers have been determined, cf. Corollary 2 of \cite{davisleary}) but we have not been able to find a reference to the general case in the literature. 

\begin{corollary} The first $L^2$-Betti number for an Artin group $A_\Gamma$ is 0 if the underlying graph is connected. More generally, if $\Gamma$ has $b$ connected components, then the first $L^2$-Betti number of $A_\Gamma$ is precisely $b-1$. 
\end{corollary}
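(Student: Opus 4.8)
The plan is to derive the Corollary directly from Theorem \ref{artin} by combining two standard facts: the inequality (\ref{ineq}) between rank gradient and the first $L^2$-Betti number, and a lower bound for $\beta^{(2)}_1(A_\Gamma)$ coming from the abelianization. First I would observe that, since the rank gradient of $A_\Gamma$ with respect to any normal chain $(H_i)$ with $\cap_i H_i = (A_\Gamma)_0$ equals $b-1$ by Theorem \ref{artin}, the inequality (\ref{ineq}) gives $\beta^{(2)}_1(A_\Gamma) \leq b-1$, provided such a chain with trivial intersection exists. Here one has to be slightly careful: (\ref{ineq}) is stated for chains with trivial intersection, whereas Artin groups are not known to be residually finite. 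However, $\beta^{(2)}_1$ only depends on the isomorphism type of the group, and one can instead apply the argument of (\ref{ineq}) to the residually finite quotient $A_\Gamma/(A_\Gamma)_0$, or simply invoke the more robust fact that $\beta^{(2)}_1(G) \le RG(G,(H_i))$ whenever $\cap_i H_i$ is central-by-amenable, or restrict attention to the case where $A_\Gamma$ is residually finite (e.g. all components of finite type or right-angled) and handle the general bound via the estimate $d(H_i) \ge \beta^{(2)}_1(H_i) = [A_\Gamma : H_i]\,\beta^{(2)}_1(A_\Gamma)$ directly on the finite-index subgroups of the chain.

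For the reverse inequality $\beta^{(2)}_1(A_\Gamma) \geq b-1$, I would use the homomorphism $\phi: A_\Gamma \to \mathbb{Z}^b$ which, on each connected component, sends every vertex generator to a fixed generator of the corresponding $\mathbb{Z}$ factor (this is well-defined because the Artin relations are balanced — each side is a word of length $n$ in two generators). The image is $\mathbb{Z}^b$, so $b_1(A_\Gamma) = \dim_{\mathbb{Q}} H_1(A_\Gamma;\mathbb{Q}) \geq b$. Then I would apply the standard lower bound $\beta^{(2)}_1(G) \geq b_1(G) - 1$ valid for any finitely presented (indeed finitely generated) group, which follows from the Euler characteristic inequalities / the Morse-theoretic estimate $\beta^{(2)}_1(G) \geq \beta^{(2)}_0(G) - \chi^{(2)}$ type reasoning, or more simply from $\beta^{(2)}_1 \ge \beta^{(2)}_1$ of any normal chain limit being at least $b_1 - 1$ via the fact that deficiency-type bounds pass to $L^2$. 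Combining, $\beta^{(2)}_1(A_\Gamma) \geq b - 1$, and together with the upper bound this gives equality.

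Actually the cleanest route avoids the $L^2$-literature entirely: for the lower bound I would simply note that the rank gradient itself satisfies $RG(A_\Gamma,(H_i)) \geq b_1(A_\Gamma)-1 \geq b-1$ — indeed for any finite-index normal $H_i$ the abelianization already forces $d(H_i) \geq \beta^{(2)}_1(H_i)$, and $\beta^{(2)}_1$ is multiplicative in finite-index subgroups — but since Theorem \ref{artin} pins $RG$ exactly to $b-1$, no separate lower bound computation is needed once we know $\beta^{(2)}_1 \geq b-1$, and this last fact is exactly the standard statement that the first $L^2$-Betti number of a group surjecting onto $\mathbb{Z}^b$ is at least $b-1$ (apply the inequality $\beta^{(2)}_1(G) \ge \beta^{(2)}_1(\mathbb{Z}^b)$... no — rather use that a surjection $G \twoheadrightarrow \mathbb{Z}^b$ yields $\beta^{(2)}_1(G) \ge b-1$ by a transfer argument over the kernel). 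So the two inputs are: (a) $\beta^{(2)}_1(A_\Gamma) \le b-1$ from Theorem \ref{artin} plus (\ref{ineq}); (b) $\beta^{(2)}_1(A_\Gamma) \ge b-1$ from the surjection onto $\mathbb{Z}^b$.

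The main obstacle, and the only genuinely delicate point, is the non-residual-finiteness issue in step (a): inequality (\ref{ineq}) as quoted requires a chain with trivial intersection, which need not exist for a general Artin group. I would resolve this by passing to $\bar A = A_\Gamma/(A_\Gamma)_0$: the chain $(H_i/(A_\Gamma)_0)$ has trivial intersection in $\bar A$, it has the same rank gradient as $(H_i)$ in $A_\Gamma$ (the groups $H_i$ and $H_i/(A_\Gamma)_0$ need not have the same rank, however) — so instead I would argue directly with finite-index subgroups: for each $i$, $d(H_i) \ge \beta^{(2)}_1(H_i) = [A_\Gamma:H_i]\beta^{(2)}_1(A_\Gamma)$ by the Lück approximation / Kazhdan inequality and multiplicativity of $\beta^{(2)}_1$ under finite index, hence $r(A_\Gamma,H_i) \ge \beta^{(2)}_1(A_\Gamma) - [A_\Gamma:H_i]^{-1}$, and letting $i\to\infty$ (the indices $[A_\Gamma:H_i]$ tend to infinity since $\cap H_i = (A_\Gamma)_0$ has infinite index, as it is contained in $\ker\phi$) gives $b-1 = RG(A_\Gamma,(H_i)) \ge \beta^{(2)}_1(A_\Gamma)$. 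This makes (\ref{ineq}) work without any residual finiteness of $A_\Gamma$ itself, only the (automatic) fact that $(A_\Gamma)_0$ has infinite index.
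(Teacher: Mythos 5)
Your upper bound $\beta^{(2)}_1(A_\Gamma)\le b-1$ is fine, and your resolution of the residual-finiteness worry is correct: the derivation of (\ref{ineq}) only uses $d(H_i)\ge\beta^{(2)}_1(H_i)$, multiplicativity of $\beta^{(2)}_1$ in finite index, and $[A_\Gamma:H_i]\to\infty$ (which holds because $(A_\Gamma)_0$ lies in the kernel of the surjection $A_\Gamma\to\mathbb{Z}$ and so has infinite index); no triviality of $\cap_i H_i$ is needed. This already settles the connected case, since $\beta^{(2)}_1\ge 0$.

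The lower bound $\beta^{(2)}_1(A_\Gamma)\ge b-1$ is where your argument breaks. Both facts you invoke are false. The inequality $\beta^{(2)}_1(G)\ge b_1(G)-1$ fails for $G=\mathbb{Z}^2$, which has $b_1=2$ but $\beta^{(2)}_1=0$; the ordinary first Betti number is not a lower bound for the first $L^2$-Betti number (the correct deficiency-type inequality bounds $\beta^{(2)}_1$ below by $\mathrm{def}(G)-1$, not by $b_1(G)-1$). Likewise, ``a group surjecting onto $\mathbb{Z}^b$ has $\beta^{(2)}_1\ge b-1$'' is refuted by $G=\mathbb{Z}^b$ itself. (Your parallel claim $RG(G,(H_i))\ge b_1(G)-1$ fails for the same example: $RG(\mathbb{Z}^2)=0$.) The correct source of the lower bound is the free product decomposition, not the abelianization: $A_\Gamma=A_{\Gamma_1}*\cdots*A_{\Gamma_b}$ where the $\Gamma_j$ are the connected components, each $A_{\Gamma_j}$ is infinite (it surjects onto $\mathbb{Z}$), and the formula (\ref{be}) applied with trivial amalgamated subgroup $K$ gives
\[ \beta^{(2)}_1(A_\Gamma)=\sum_{j=1}^{b}\beta^{(2)}_1(A_{\Gamma_j})+(b-1)\ge b-1 .\]
Combined with your upper bound (or directly with the connected case, which forces each summand to vanish), this yields $\beta^{(2)}_1(A_\Gamma)=b-1$. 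So the structure of your proof is right, but the second input (b) must be replaced by the free product formula for $L^2$-Betti numbers.
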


\section{\A, \ON \ and \M}\label{freegrp}

\begin{theorem}\label{free}
Let $G$ be one of the groups \A, $n\geq 2$, \ON, $n \geq 3$ and \M, $g\geq 2$. Then $G$ has fixed price 1 and in particular the rank gradient for $G$ is zero for any normal chain with trivial intersection.
\end{theorem}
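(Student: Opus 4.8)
The plan is to prove the stronger assertion that each group $G$ on the list has \emph{fixed price} $1$; the statement about rank gradient then follows formally. Indeed, by the Abert--Nikolov identity $RG(G,(H_i))=c(G,\hat G)-1$ recalled in the introduction, and since $c(G)\ge 1$ for any infinite $G$, fixed price $1$ forces $RG(G,(H_i))=0$ for every normal chain with $\cap_iH_i=1$ (the left multiplication action on $\hat G$ is then essentially free). So I shall only discuss fixed price, and only three inputs from Section~\ref{basic} are needed: a group with infinite centre has fixed price $1$ (Proposition~\ref{cost}(ii)); if $G=\langle A,B\rangle$ with $A\cap B$ infinite and $A,B$ of fixed price $1$, then $G$ has fixed price $1$ (Proposition~\ref{cost}(iii)); and the behaviour of amalgams over amenable subgroups (Proposition~\ref{cost}(iv)). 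In each of the three families the scheme is the same: realise $G$ as $\langle A,B\rangle$ with $A,B$ suitable ``stabiliser'' subgroups, arranged so that $A\cap B$ is infinite and so that $A,B$ are either (copies of) groups of smaller complexity, to which an induction applies, or carry an infinite cyclic central subgroup --- that central $\mathbb Z$ being a Dehn twist about a boundary curve in the \M\ case and an inner automorphism $c_x$ by a fixed basis element in the \A\ case.

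For \M\ with $g\ge 2$ I would choose two proper essential subsurfaces $\Sigma_1,\Sigma_2\subsetneq S_g$, each with non-empty boundary, such that each element of a standard generating set of \M\ by Dehn twists (Humphries or Lickorish) lies in one of the two subgroups $A,B$ of \M\ supported on $\Sigma_1$ and on $\Sigma_2$, and such that at least one generating curve lies in $\Sigma_1\cap\Sigma_2$. Then $A$ and $B$ generate \M; the twist about a boundary curve of $\Sigma_i$ is a non-trivial, hence infinite-order, central element of $A$ (respectively $B$), so $A$ and $B$ have fixed price $1$ by Proposition~\ref{cost}(ii); and $A\cap B$ contains the twist about a common generating curve, so it is infinite. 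Proposition~\ref{cost}(iii) then gives the result directly, with no induction needed.

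For \A\ with $n\ge 3$ I would take $A,B$ to be the stabilisers in $\mathrm{Aut}(F_n)$ of the basis elements $x_1$ and $x_n$. A short computation with Nielsen generators shows that $A$ and $B$ generate \A: each inversion and each elementary transvection moves at most one generator and so lies in $A$ or in $B$, while $(x_1\,x_n)=(x_1\,x_2)(x_2\,x_n)(x_1\,x_2)$ with the two outer factors fixing $x_n$ and the middle one fixing $x_1$. Since $c_{x_i}$ commutes with every automorphism fixing $x_i$, it is central of infinite order in $A$ and in $B$, so both have fixed price $1$. Finally, because $n\ge 3$ there is a basis element $x_2$ distinct from $x_1,x_n$, and all the partial conjugations $x_2\mapsto x_1^{k}x_2x_1^{-k}$ (other generators fixed) lie in $A\cap B$, which is therefore infinite; Proposition~\ref{cost}(iii) then finishes every $n\ge 3$ simultaneously. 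The case of \ON\ with $n\ge 4$ runs in parallel: take $A,B$ to be the two copies of $\mathrm{Aut}(F_{n-1})$ in $\mathrm{Out}(F_n)$ obtained by letting $\mathrm{Aut}(\langle x_1,\dots,x_{n-1}\rangle)$, resp. $\mathrm{Aut}(\langle x_2,\dots,x_n\rangle)$, act and fixing the remaining generator; these embed injectively, generate \ON, and have fixed price $1$ by the case of \A\ already settled, while $A\cap B$ contains the embedded copy of $\mathrm{Aut}(F_{n-2})$, which is infinite once $n-2\ge 2$.

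The main obstacle is the short list of small cases the uniform argument misses --- principally \AT, and also \OT, where the naive free-factor stabilisers meet in only a finite group. The point for \AT\ is genuinely delicate, because its quotient $\mathrm{Out}(F_2)\cong\mathrm{GL}_2(\mathbb Z)$ is virtually free and so has cost strictly greater than $1$; any argument must therefore exploit the structure of $\mathrm{Aut}(F_2)$ as an extension of $\mathrm{GL}_2(\mathbb Z)$ by the free normal subgroup $\mathrm{Inn}(F_2)$. Here the element-stabilisers still generate \AT\ and still have infinite centre, but they now meet trivially, so Proposition~\ref{cost}(iii) cannot be applied to them. My plan would be to replace them by the conjugacy-class stabilisers $\mathrm{Aut}(F_2)_{[x_1]}$ and $\mathrm{Aut}(F_2)_{[x_2]}$: these still generate \AT, they share the infinite common subgroup $\mathrm{Inn}(F_2)$, and each is an extension of a virtually cyclic (hence amenable) subgroup of $\mathrm{GL}_2(\mathbb Z)$ by $\mathrm{Inn}(F_2)\cong F_2$, so each is virtually a free-by-cyclic group $F_2\rtimes\mathbb Z$; granting that such mapping-torus groups have fixed price $1$, Proposition~\ref{cost}(iii) closes the case. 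The corresponding repair for \OT\ should enlarge the two subgroups to full free-factor stabilisers and verify that their intersection is infinite. With these exceptional cases settled, the inductions above --- whose only remaining delicate point is to keep each $A\cap B$ infinite --- complete the proof.
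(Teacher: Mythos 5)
Your strategy for the cases away from the bottom of each family is correct and in places genuinely slicker than the paper's. For $\mathrm{Aut}(F_n)$, $n\ge 3$, your observation that the point stabilizers $\mathrm{Stab}(x_1)$ and $\mathrm{Stab}(x_n)$ each contain the infinite-order \emph{central} element $c_{x_1}$ (resp.\ $c_{x_n}$) --- so Proposition \ref{cost}(ii) applies to them directly --- together with the Nielsen-generator check that they generate and intersect infinitely, settles all $n\ge 3$ in one stroke, with no braid groups and no induction; the paper instead builds everything out of $B_n/Z_n$ and copies of $\mathrm{Aut}(F_2)$. Likewise your mapping class group argument (two overlapping proper essential subsurfaces, boundary twists giving infinite centre) replaces the paper's chain of $B_3$-subgroups generated by consecutive Lickorish twists; it works, provided you choose the subsurfaces so that their boundary curves are essential in $S_g$ (for $g=2$ the naive ``neighbourhood of all but one generator'' has boundary bounding a disc, so some care is needed, but a valid configuration exists). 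Your $\mathrm{Out}(F_n)$, $n\ge 4$, argument coincides with the paper's.

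The genuine gaps are exactly the two cases you flag, and they are cases the theorem asserts. For \AT, your reduction to conjugacy-class stabilizers is sensible, but it bottoms out in the claim that a virtually $F_2\rtimes\mathbb{Z}$ group has fixed price $1$, which you explicitly ``grant''. This is not a consequence of anything in Proposition \ref{cost}: (iii) requires \emph{two subgroups} of fixed price $1$ with infinite intersection, and $F_2\rtimes\mathbb{Z}$ is generated by a cost-$2$ normal subgroup and one extra element; the general statement that finitely generated free-by-cyclic groups have cost (let alone fixed price) $1$ is precisely the kind of assertion this circle of problems leaves open, so it cannot be waved through. The paper avoids it entirely: by Dyer--Formanek, \AT\ contains $B_4/Z_4$ with index $2$; $B_4/Z_4$ has fixed price $1$ because the two standard copies of $B_3$ in $B_4$ meet $Z_4$ trivially, have fixed price $1$ by the Artin group theorem (Theorem \ref{artin}), and have infinite intersection; one then adjoins an infinite-order element outside $B_4/Z_4$ and applies \ref{cost}(iii) once more. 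For \OT\ you offer only ``enlarge the two subgroups \dots and verify that their intersection is infinite''; this is where the paper has to work hardest, producing three explicit copies $X,Y,Z$ of \AT\ and explicit automorphisms $\alpha,\beta,\gamma$ witnessing that consecutive intersections are infinite in $\mathrm{Out}(F_3)$ --- and note that any such repair still presupposes the \AT\ case. So as written the proposal proves the theorem for \M\ ($g\ge2$), $\mathrm{Aut}(F_n)$ ($n\ge3$) and $\mathrm{Out}(F_n)$ ($n\ge4$), but not for \AT\ or \OT.
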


Here, \M \ denotes the mapping class group of the closed surface $S_g$ of genus $g$. Note that $S_1$ is the torus and its mapping class group is $SL(2,\mathbb{Z})$. It is well-known that $SL(2,\mathbb{Z})$ is isomorphic to the amalgam $\mathbb{Z}/6\mathbb{Z}*_{\mathbb{Z}/2\mathbb{Z}} \mathbb{Z}/4\mathbb{Z}$ and therefore from Proposition \ref{amalgam} its rank gradient is equal to $RG(\mathbb Z / 4\mathbb Z)+ RG(\mathbb Z/ 6 \mathbb Z)+|\mathbb Z/2\mathbb Z|^{-1}=  \frac{1}{2}- \frac{1}{4}- \frac{1}{6}=\frac{1}{12}$.
\bigskip

\begin{proof}[Proof of Theorem \ref{free}] Consider first the mapping class groups. We will use the following result: \M \ is generated by some collection of Dehn twists $h_1,\ldots, h_{m}$, possibly listed with repetitions such that all the subgroups $H_i:=\langle h_i, h_{i+1} \rangle$ for $i=1,\ldots, m-1$ are isomorphic to the braid group $B_3$ on 3 strands. Indeed we may simply take the standard Lickorish generators of \M \ \cite{Lic} in the following sequence:   \[ m_1,a_1,c_1,a_2,m_2,a_2,c_2,a_3,m_3, \ldots, c_{g-1},a_g,m_g. \]
By \cite{BH} each pair of consecutive Lickorish twists generates a copy of $B_3$ since they are defined by two simple closed non-separating curves with intersection number 1.

 Since braid groups are Artin groups, by Theorem \ref{artin} $B_3 \simeq H_i$ has fixed price 1. Every two consecutive subgroups $H_i$ and $H_{i+1}$ intersect in an infinite cyclic subgroup. Therefore, by Proposition \ref{cost} (iii), the Theorem follows for all mapping class groups with $g\geq 2$. 

To deal with \A \  first we note the well-known fact that $B_n$ embeds in \A \ such that the centre $Z_n$  of $B_n$ is $B_n \cap \mathrm{Inn}(F_n)$. This shows that $B_n/Z_n$ is residually finite because \ON\ is residually finite \cite{Grossman}. For $n \geq 4$ let $A=\langle \sigma_1, \ldots, \sigma_{n-2} \rangle$ and $B= \langle \sigma_2, \sigma_3, \ldots, \sigma_{n-1} \rangle$ denote the two canonical copies of $B_{n-1}$ in $B_n$. From the description of $Z_n$ as the subgroup generated by the Garside element $\Delta_n$ we see that $A \cap Z_n= B \cap Z_n= \{1\}$ and therefore the images of $A$ and $B$ in $B_n/Z_n$ are two groups with fixed price 1 and infinite intersection. With an application of Proposition \ref{cost}(iii) we have therefore proved the following.

\begin{proposition} Let $n \geq 4$. The group $B_n/Z_n$ has fixed price 1 and rank gradient equal to 0.
\end{proposition}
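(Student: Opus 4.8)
The plan is to exhibit $B_n/Z_n$ as a group generated by two subgroups of fixed price $1$ whose intersection is infinite, and then to apply Proposition~\ref{cost}(iii). Before that I would record the ambient facts noted just above: the Artin representation embeds $B_n$ in \A\ with $B_n \cap \mathrm{Inn}(F_n) = Z_n$, so $B_n/Z_n$ embeds in \ON, which is residually finite by \cite{Grossman}; hence $B_n/Z_n$ is residually finite and does admit normal chains with trivial intersection, so the rank gradient assertion is not vacuous. Once fixed price $1$ is established, rank gradient $0$ for every such chain follows from the Abert--Nikolov identity $RG(G,(H_i)) = c(G,\hat G)-1$.

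Next I would set up the two subgroups. For $n\geq 4$ let $A = \langle \sigma_1,\dots,\sigma_{n-2}\rangle$ and $B = \langle \sigma_2,\dots,\sigma_{n-1}\rangle$ be the two standard parabolic copies of $B_{n-1}$ in $B_n$. Since $n-1\geq 3$, each is an Artin group whose defining graph is a path, hence connected, so by Theorem~\ref{artin} each of $A$ and $B$ has fixed price $1$. Together they contain every $\sigma_i$, so $\langle A,B\rangle = B_n$; and $A\cap B \supseteq \langle \sigma_2,\dots,\sigma_{n-2}\rangle \cong B_{n-2}$, which is infinite exactly when $n\geq 4$. (For $n=3$ this intersection is trivial, and indeed $B_3/Z_3 \cong PSL(2,\mathbb{Z})$ has nonzero rank gradient, so the hypothesis $n\geq 4$ is essential.)

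The one point that requires genuine checking --- and the only real obstacle --- is that the centre $Z_n$, infinite cyclic generated by the full twist $\Delta_n^2$, meets these parabolics trivially: $A\cap Z_n = B\cap Z_n = \{1\}$. This is what ensures that passing to the quotient $B_n/Z_n$ neither collapses $A$ or $B$ (their images are again isomorphic to $B_{n-1}$, hence still of fixed price $1$) nor collapses $A\cap B$ (the image of $A\cap B$ is still infinite, since $(A\cap B)\cap Z_n \subseteq A\cap Z_n = \{1\}$). I would verify this from the Garside normal form --- no nonzero power of $\Delta_n^2$ is supported on a standard proper parabolic subgroup --- or geometrically, viewing $B_n$ as the mapping class group of the $n$-punctured disk, in which $\Delta_n^2$ is the boundary Dehn twist while $A$ (resp.\ $B$) is supported on a sub-disk omitting the last (resp.\ first) puncture.

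Finally, the images $\bar A,\bar B\leq B_n/Z_n$ are of fixed price $1$, they generate $B_n/Z_n$, and $\bar A\cap\bar B$ is infinite; Proposition~\ref{cost}(iii) then gives that $B_n/Z_n$ has fixed price $1$, whence rank gradient $0$ for every normal chain with trivial intersection.
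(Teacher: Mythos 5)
Your proof is correct and follows essentially the same route as the paper: the two standard parabolic copies $A,B\cong B_{n-1}$, each of fixed price $1$ by Theorem~\ref{artin}, meeting $Z_n$ trivially (via the Garside description of the centre), with infinite intersection containing $B_{n-2}$, followed by Proposition~\ref{cost}(iii) applied to the images in $B_n/Z_n$. The only difference is that you spell out more carefully why $A\cap Z_n=\{1\}$ and why the hypothesis $n\geq 4$ is needed, which the paper leaves implicit.
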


Now \AT \ contains $B_4/Z_4$ as a subgroup of index two \cite{DF}. By choosing an element of infinite order in \AT \ outside $B_4/Z_4$ (i.e. any element which maps onto a matrix of infinite order and determinant -1 in $GL(2, \mathbb Z) \simeq Out(F_2)$ ) we deduce that \AT \ is generated by two groups of fixed price 1 with infinite intersection and therefore by Proposition \ref{cost} (iii) \AT \ also has fixed price 1. Let now $n>2$ and let $x_1, \ldots, x_n$ be the free generators of $F_n$. For $i=1, \ldots, n-1$, let $A_i$ be the copy of \AT \ in \A\ which preserves $\langle x_i, x_{i+1}\rangle$ and fixes all the generators except $ x_i$ and $x_{i+1}$. Consider the automorphisms $f_i \in A_i$ given by $f_i(x_i)=x_ix_{i+1}x_i^{-1}, f(x_{i+1})=x_i$. The group $D$ generated by $f_1, \ldots, f_{n-1}$ is isomorphic to $B_{n}$. 
Note that \A\ is generated by the $A_i$, and moreover $A_i \cap D \geq \langle f_i \rangle$ is infinite. Using that $A_i$ and $D$ both have fixed price 1, Theorem \ref{free} for $\mathrm{Aut}(F_n)$ follows again from Proposition \ref{cost} (iii). 

We now turn to the outer automorphism groups \ON. Again let $x_1, \ldots,x_n$ denote a set of free generators for $F_n$. If $n\geq 4$, then \ON\ is generated by the two copies of $\mathrm{Aut}(F_{n-1})$ in \ON, one which fixes $x_1$ and the other fixes $x_n$. Clearly \AT\  is contained in the intersection of these two subgroups. We can therefore invoke Proposition \ref{cost}(iii) and the fixed price 1 of \A\  to see that the Theorem holds for \ON, $n \geq 4$. 

We now deal with \OT. First we shall define three copies of \AT \ in $\mathrm{Aut}(F_3)$.

Let $X$ be the copy of \AT \ acting on $\langle x_1,x_2\rangle$ and fixing $x_3$, $Y$ be the copy of \AT \ acting on $\langle x_2,x_3\rangle$ and fixing $x_1$ and let $Z$ be the copy of \AT \ acting on $\langle x_1,x_2\rangle $ and fixing $x_1x_3$. Define $\alpha, \beta, \gamma \in \mathrm{Aut}(F_3)$ as follows:

\[ \alpha (x_1)=x_1, \ \alpha(x_2)=x_1x_2, \ \alpha(x_3)=x_3, \] \[ \beta (x_1)=x_1x_2 , \ \beta(x_2)=x_2, \ \beta(x_1x_3)=x_1x_3,\]
\[ \gamma(x_1)=x_1x_2^{-1}, \ \gamma(x_2)=x_2, \ \gamma(x_3)=x_3. \]
Now $\alpha \in X \cap Z, \beta \in Z, \gamma \in X$ and $\beta(x_3)=x_2^{-1}x_3$. The composition $\gamma \circ \beta$ fixes $x_1$ and $x_2$ and sends $x_3$ to $x_2^{-1}x_3$. 

Let us write $\bar a$,  $\bar A$ for the image of the element $a$, and respectively subgroup $A$ of $\mathrm{Aut}(F_3)$ in \OT. The groups $\bar X,\bar Y,\bar Z$ are all isomorphic to \AT \ which has fixed price 1. The intersection $\bar X \cap \bar Z \geq \langle \bar \alpha \rangle$ is infinite and therefore the group $\langle \bar X,\bar Z \rangle$ has fixed price 1.
Now $\langle \bar X,\bar Z \rangle \cap \bar Y$ contains $\bar \gamma \circ \bar \beta$ and is therefore infinite giving that $\langle \bar X,\bar Y,\bar Z \rangle$ has fixed price 1. Finally we note that $\bar X, \bar Y$ generate \OT. This completes the proof of Theorem \ref{free}. 
\end{proof}

\section{Coxeter groups}\label{cox}
As before, let $\Gamma$ be a finite graph labelled with integers bigger than 1. The Coxeter group $C_\Gamma$ is defined to be the image of the Artin group $A_\Gamma$ with the extra relations that the generators $a_v$ have order 2. 
\[ C_\Gamma= \langle A_\Gamma \  | \ a_v^2=1, \forall v \in V\rangle. \]

In many ways Coxeter groups are better understood than Artin groups, for example they are all linear groups (and so residually finite), with a concrete finite dimensional $K(\pi,1)$ complex. However their $L^2$-Betti numbers have been computed only in some special situations even in the case of right angled Coxeter groups, see for example \cite{davies}.

Our result on rank gradient is even more special. Let $\mathcal C_0$ be the class of Coxeter groups which are virtually abelian, virtually free or virtually surface groups. Let $\mathcal C$ be the smallest class of groups which contains $\mathcal C_0$ and is closed under amalgamation along subgroups $K$ with $\beta_1^{(2)}(K) = 0$ (for example $K$ can be any amenable group or a direct product of two infinite groups).
\begin{theorem} \label{coxeter} Let $G$ be an infinite group in $\mathcal C$ and let $(H_i)$ be an infinite normal chain in $G$ with trivial intersection. Then $RG(G,(H_i))=\beta_1^{(2)}(G)$.
\end{theorem}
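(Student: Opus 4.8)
The plan is to argue by induction on the construction of $G$ inside $\mathcal{C}$, proving the uniform statement that for every $G\in\mathcal{C}$, finite or infinite, and every normal chain $(H_i)$ in $G$ with $\cap_i H_i=\{1\}$ one has $RG(G,(H_i))=\beta_1^{(2)}(G)-\beta_0^{(2)}(G)$. For infinite $G$ this is exactly the theorem, since then $\beta_0^{(2)}(G)=0$, and for finite $G$ both sides equal $-|G|^{-1}$, with nothing to prove because $H_i=\{1\}$ for all large $i$. I will use throughout that $\beta_0^{(2)}(G)=|G|^{-1}$ for finite $G$ and $0$ for infinite $G$, the inequality $RG(G,(H_i))\ge\beta_1^{(2)}(G)$ from (\ref{ineq}), and the standard behaviour of $L^2$-homology under amalgamation.

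\emph{Base case $G\in\mathcal{C}_0$.} Finite $G$ is immediate. If $G$ is infinite and virtually abelian it is amenable, so $\beta_1^{(2)}(G)=0$, and $RG(G,(H_i))=0$ by Proposition \ref{center} applied with $C$ a finite-index normal abelian subgroup: finitely generated abelian groups have vanishing rank gradient along every chain, and $C\cap(\cap_i H_i)=\{1\}$. If $G$ is an infinite virtually free group or an infinite virtual surface group, then $G$ has fixed price $1-\chi(G)$ by Gaboriau's computations of the cost of virtually free groups and of surface groups \cite{Ga}, using the multiplicativity of both $c-1$ and $\beta_1^{(2)}$ under passage to finite-index subgroups; since the only possibly nonzero $L^2$-Betti number of such a $G$ is $\beta_1^{(2)}(G)$, this forces $\beta_1^{(2)}(G)=-\chi(G)$ and hence $c(G)=1+\beta_1^{(2)}(G)$. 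Having fixed price, $G$ satisfies $RG(G,(H_i))=c(G,\hat G)-1=c(G)-1=\beta_1^{(2)}(G)$ for every chain with trivial intersection, by the theorem of Abert and Nikolov \cite{AN}.

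\emph{Inductive step.} Let $G=A\star_K B$ with $A,B\in\mathcal{C}$ already satisfying the claim and $\beta_1^{(2)}(K)=0$; we may assume $K$ is proper in each factor, so $G$ is infinite. For a normal chain $(H_i)$ in $G$ with $\cap_i H_i=\{1\}$, each of $(A\cap H_i)$ and $(B\cap H_i)$ is a normal chain with trivial intersection in the corresponding factor, and whenever that factor is infinite the indices tend to infinity, since a descending chain of finite-index subgroups with trivial intersection in an infinite group cannot be eventually constant. If $K$ is finite, Proposition \ref{amalgam} gives $RG(G,(H_i))=RG(A,(A\cap H_i))+RG(B,(B\cap H_i))+|K|^{-1}$, which by the inductive hypothesis is $(\beta_1^{(2)}(A)-\beta_0^{(2)}(A))+(\beta_1^{(2)}(B)-\beta_0^{(2)}(B))+\beta_0^{(2)}(K)$. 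If $K$ is infinite then $A$ and $B$ are infinite, $[K:K\cap H_i]\to\infty$ by the same reasoning, and Proposition \ref{generation} gives $RG(G,(H_i))\le RG(A,(A\cap H_i))+RG(B,(B\cap H_i))=\beta_1^{(2)}(A)+\beta_1^{(2)}(B)$. In both cases the Mayer--Vietoris sequence of the splitting in $L^2$-homology, together with $\beta_1^{(2)}(K)=0$, produces the exact sequence of Hilbert modules $0\to H_1^{(2)}(A)\oplus H_1^{(2)}(B)\to H_1^{(2)}(G)\to H_0^{(2)}(K)\to H_0^{(2)}(A)\oplus H_0^{(2)}(B)\to H_0^{(2)}(G)\to 0$, whose von Neumann dimensions satisfy $\beta_1^{(2)}(G)-\beta_0^{(2)}(G)=\beta_1^{(2)}(A)+\beta_1^{(2)}(B)+\beta_0^{(2)}(K)-\beta_0^{(2)}(A)-\beta_0^{(2)}(B)$. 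For $K$ finite this equals the value of $RG(G,(H_i))$ just computed; for $K$ infinite it identifies $\beta_1^{(2)}(A)+\beta_1^{(2)}(B)$ with $\beta_1^{(2)}(G)$, so that the upper bound from Proposition \ref{generation} meets the lower bound (\ref{ineq}). Since $G$ is infinite, $\beta_0^{(2)}(G)=0$, and the induction closes.

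The essential external ingredient is the base case: the fixed-price statements for infinite virtually free and virtual surface Coxeter groups rest on Gaboriau's (nontrivial) cost computations, and this is exactly why $\mathcal{C}_0$ is restricted to these at-most-two-dimensional groups, where moreover $\beta_1^{(2)}$ has a closed form. The remaining subtlety is that Proposition \ref{generation} yields only an inequality; this is harmless since (\ref{ineq}) supplies the reverse one, but it does mean the argument delivers the rank-gradient identity rather than a fixed-price statement for all of $\mathcal{C}$ when the amalgamated subgroup is infinite. One should also check the routine arithmetic identities $|K|^{-1}=\beta_0^{(2)}(K)$ and the matching of the two displayed expressions, and that $A\star_K B$ is genuinely infinite whenever $K$ is a proper subgroup of both factors.
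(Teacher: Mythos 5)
Your proof is correct and follows essentially the same route as the paper: induction on the amalgamation steps defining $\mathcal{C}$, with Proposition \ref{amalgam} handling finite amalgamated subgroups and Proposition \ref{generation} combined with the lower bound (\ref{ineq}) handling infinite ones, together with the $L^2$-Betti number formula for amalgams over $K$ with $\beta_1^{(2)}(K)=0$. Your strengthened induction hypothesis $RG=\beta_1^{(2)}-\beta_0^{(2)}$ is a tidy refinement that handles finite free factors and the virtually abelian base case more explicitly than the paper does, but the substance of the argument is the same.
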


\proof If $G \in \mathcal C_0$ the claim is straightforward: If $G$ is virtually a surface or virtually a free group then some member of the chain $N_j$ is either a surface group or a free group, in which case the equality between cost-1, rank gradient and first $L^2$-Betti number is well known (see \cite{Ga} Proposition VI.9 and \cite{Eck} Example 3.8.2). In general the definition of $\mathcal C$ says that $G$ can be obtained by a sequence of amalgamations starting from some groups in $\mathcal C_0$. By induction on the number of amalgamation steps it is enough to prove the following:
if $G=G_1 *_K G_2$ such that $\beta_1^{(2)}(K) = 0$ and $G_1,G_2$ satisfy the claim in the Theorem, then $RG(G, (N_i)) = \beta_1^{(2)}(G)$. To establish the claim, we use the following equality for $L^2$-Betti numbers (with the convention that $1/|G|=0$ if $G$ is infinite):

\begin{equation} \label{be} \beta_1^{(2)}(G)= \beta_1^{(2)}(G_1)-\frac{1}{|G_1|}+\beta_1^{(2)}(G_2)-\frac{1}{|G_2|} + \frac{1}{|K|} \end{equation}
 which holds whenever $\beta_1^{(2)}(K)=0$ , see the appendix of \cite{betti}. When $K$ is finite then the claim follows from Proposition \ref{amalgam} and the assumption on $G_1,G_2$. When $K$ is infinite then Proposition \ref{generation} gives
\[ RG(G, (N_i)) \leq RG(G_1, (G_1 \cap N_i)) + RG(G_1, (G_1 \cap N_i)) = \] \[=\beta_1^{(2)}(G_1)+\beta_1^{(2)}(G_2)=\beta_1^{(2)}(G) \leq RG(G, (N_i)) \] 
and hence again equality must hold. $\square$

\bigskip

We give an application.

\begin{theorem} Suppose that $\Gamma$ is a planar graph without circuits of length less than 6. Then $C_\Gamma$ has fixed price and for any normal chain $(N_i)$ in $C_\Gamma$ with trivial intersection we have
\[ RG(C_\Gamma, (N_i))= \beta_1^{(2)}(C_\Gamma)=\frac{|V|}{2} -1 - \sum_{e \in EV} \frac{1}{2l_e} \] where $l_e$ is the label of the edge $e$ of $\Gamma$.
\end{theorem}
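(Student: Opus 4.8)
The plan is to realise $C_\Gamma$ as a group built from members of $\mathcal C_0$ by a sequence of amalgamations along subgroups $K$ with $\beta_1^{(2)}(K)=0$, proceeding by induction on $n=|V\Gamma|$ and removing one vertex at each step, in the spirit of Lemma~\ref{Artin}. Once $C_\Gamma\in\mathcal C$ is established, Theorem~\ref{coxeter} gives $RG(C_\Gamma,(N_i))=\beta_1^{(2)}(C_\Gamma)$ for every normal chain with trivial intersection, Proposition~\ref{cost}(iv) propagates fixed price across each amalgamation (the amalgamated subgroups below all being amenable), and the value of $\beta_1^{(2)}(C_\Gamma)$ is extracted by feeding the same decomposition into the $L^2$-Betti number identity~(\ref{be}). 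We assume $C_\Gamma$ infinite, so that $\beta_1^{(2)}(C_\Gamma)$ agrees with the quantity computed; otherwise there is nothing of the stated shape to prove.

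The one combinatorial input is that \emph{a planar graph of girth at least $6$ has a vertex of degree at most $2$}: after reducing to the connected case (a free product is an amalgam over $\{1\}$) and then, by splitting at cut vertices over copies of $\mathbb{Z}/2$, to the $2$-connected case, Euler's formula together with the girth bound $2|E\Gamma|\ge 6|F|$ gives $|E\Gamma|\le\tfrac32(|V\Gamma|-2)<\tfrac32|V\Gamma|$, so the degrees cannot all be $\ge 3$ (a forest has a leaf anyway). Fix such a vertex $v$. By the standard visual splittings of Coxeter groups---a vertex set separating $v$ from the rest yields a genuine amalgamated free product decomposition of $C_\Gamma$---and because girth $\ge 6$ forbids triangles, one is in one of three cases: $\deg v=0$, so $C_\Gamma=(\mathbb{Z}/2)*C_{\Gamma\setminus v}$; $\deg v=1$ with neighbour $u$, so $C_\Gamma=C_{\{u,v\}}*_{\langle a_u\rangle}C_{\Gamma\setminus v}$, amalgamating the finite dihedral group $C_{\{u,v\}}$ and $C_{\Gamma\setminus v}$ over $\mathbb{Z}/2$; or $\deg v=2$ with neighbours $u\ne w$ (and $uw\notin E\Gamma$), so $C_\Gamma=C_{\{u,v,w\}}*_{C_{\{u,w\}}}C_{\Gamma\setminus v}$ with $C_{\{u,w\}}\cong\mathbb{Z}/2*\mathbb{Z}/2$ infinite dihedral and $C_{\{u,v,w\}}\cong D_p*_{\mathbb{Z}/2}D_q$ virtually free. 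In each case $\Gamma\setminus v$ is again planar of girth $\ge 6$ with one fewer vertex, the small factor lies in $\mathcal C_0$ (being finite or virtually free), and the amalgamated subgroup---$\{1\}$, a finite dihedral group, or $D_\infty$---is amenable and so has vanishing first $L^2$-Betti number. By induction $C_{\Gamma\setminus v}\in\mathcal C$, hence $C_\Gamma\in\mathcal C$; fixed price is inherited via Proposition~\ref{cost}(iv); and Theorem~\ref{coxeter} gives $RG(C_\Gamma,(N_i))=\beta_1^{(2)}(C_\Gamma)$.

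It remains to evaluate $\beta_1^{(2)}(C_\Gamma)$. For a group $G$ set $\widetilde\beta(G)=\beta_1^{(2)}(G)-1/|G|$ (with $1/\infty=0$); then~(\ref{be}) says exactly that $\widetilde\beta(G_1*_K G_2)=\widetilde\beta(G_1)+\widetilde\beta(G_2)-\widetilde\beta(K)$ whenever $\beta_1^{(2)}(K)=0$. Thus $\widetilde\beta(C_\Gamma)$ is determined by this additivity over the peeling moves together with its values on the groups occurring there, namely $\widetilde\beta(\{1\})=-1$, $\widetilde\beta(\mathbb{Z}/2)=-\tfrac12$, $\widetilde\beta(D_l)=-\tfrac1{2l}$, $\widetilde\beta(\mathbb{Z}/2*\mathbb{Z}/2)=0$, and $\widetilde\beta(D_p*_{\mathbb{Z}/2}D_q)=\tfrac12-\tfrac1{2p}-\tfrac1{2q}$ (the last by one more use of~(\ref{be})). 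A direct check shows that the combinatorial function $F(\Gamma)=\tfrac{|V\Gamma|}{2}-1-\sum_{e\in E\Gamma}\tfrac{1}{2l_e}$ obeys the identical recursion---for each peeling move $F(\Gamma)=F(\Gamma_{\mathrm{small}})+F(\Gamma\setminus v)-F(\Gamma_{\mathrm{amalg}})$, where $\Gamma_{\mathrm{small}}$ and $\Gamma_{\mathrm{amalg}}$ are the induced subgraphs carrying the small factor and the amalgamated subgroup---and takes the identical values on the corresponding base graphs (empty graph, single vertex, single edge, two isolated vertices, two-edge path). Hence $\widetilde\beta(C_\Gamma)=F(\Gamma)$, and since $C_\Gamma$ is infinite this is $\beta_1^{(2)}(C_\Gamma)=\tfrac{|V\Gamma|}{2}-1-\sum_{e\in E\Gamma}\tfrac{1}{2l_e}$, as claimed.

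The step I expect to be the main obstacle is this last one: verifying that the elementary weight $F(\Gamma)$ transforms under each of the three peeling moves \emph{exactly} as~(\ref{be}) prescribes for $\widetilde\beta(C_\Gamma)$. This is where one must make the bookkeeping of the finite-dihedral terms $-1/(2l_e)$, the $-1/|G|$ corrections in~(\ref{be}), and the constant $-1$ in the definition of $F$ balance. The remaining ingredients---that a planar graph of girth $\ge 6$ has a vertex of degree $\le 2$, that the relevant decompositions of $C_\Gamma$ are genuine amalgamated products, and that path Coxeter groups are virtually free---are standard.
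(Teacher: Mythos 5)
Your argument is correct and follows essentially the same route as the paper: locate a vertex of valency at most $2$ via Euler's formula and the girth bound, peel it off as an amalgam of a small virtually free (or finite) Coxeter group and $C_{\Gamma\setminus v}$ over a trivial, finite dihedral, or infinite dihedral subgroup, and induct using Proposition~\ref{cost}(iv) and the identity~(\ref{be}). Your packaging — establishing $C_\Gamma\in\mathcal C$ so as to quote Theorem~\ref{coxeter}, and organising the Betti-number computation through the reduced invariant $\widetilde\beta$ and the matching recursion for $F(\Gamma)$ — is only a tidier bookkeeping of the same computation, and your explicit treatment of the valency $0$ and $1$ cases (left as ``similar'' in the paper) checks out.
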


\proof We show first that $\Gamma$ must have a vertex, say $v$ of valency at most two. Indeed, if every vertex has valency $\geq 3$, then the number $|E|$ of edges of $\Gamma$ is at least $3|V|/2$. On the other hand the number of regions of the plane cut out by $\Gamma$ is at most $|E|/3$ (because every region has at least 6 edges on the boundary). Now Euler's formula 
$1=|V|-|E|+|F| \leq |V| -2|E|/3 \leq 0$ derives a contradiction.

Now suppose that $v$ has valency 2 and let $w_1,w_2$ be the two neighbours of $v$. Set $A=\langle a_v,a_{w_1},a_{w_2} \rangle$, a Coxeter group whose graph is the two edges $e_1=(v,w_1)$ and $e_2=(v,w_2)$. One can check, for instance, by using (\ref{be}) that the rank gradient of the virtually free group $A$ is equal to $\beta_1^{(2)}(A)=\frac{1}{2}- \frac{1}{2l_{e_1}} - \frac{1}{2l_{e_2}}$ and of course $A$ has fixed price. 

Let $B$ be the subgroup generated by all $a_u$ for $u \in V \backslash \{v\}$. Then $B$ is a Coxeter group with graph $ \Gamma'=\Gamma - \{v\}$ and $C_\Gamma$ is the amalgam of $A$ and $B$ along the intersection $\langle a_{w_1},a_{w_2} \rangle \simeq D_{\infty}$. By induction on $|V|$ we may assume that the Theorem holds for $B$, in particular
$$ \beta_1^{(2)}(B)= \frac{|V|-1}{2} - 1 -\sum_{e \in E \Gamma'} \frac{1}{2l_e}.$$

Proposition \ref{cost} (iv) shows that $C_\Gamma$ has fixed price and together with (\ref{be}) gives that the rank gradient and first $L^2$-Betti number of $C_\Gamma$ is  $\beta_1^{(2)}(A)+ \beta_1^{(2)}(B)$. The case when $v$ has valency zero or 1 is similar. 
$\square$
\bigskip

\noindent \emph{Acknowledgement:} We thank M. Davis and Y. Antolin-Pichel for helpful comments.
 \medskip

\noindent Address: Mathematical Institute, Oxford, OX1 3LB,
UK \newline email: kar@maths.ox.ac.uk; nikolov@maths.ox.ac.uk \medskip

\end{document}